\newtheorem{thr}{Theorem}
\newtheorem{obs}[thr]{Observation}
\theoremstyle{definition}
\newtheorem{prob}[thr]{Problem}
\theoremstyle{remark}
\def\G{{\mathcal G}}
\def\E{{\mathcal E}}
\def\V{{\mathcal V}}
\def\U{{\mathcal U}}
\def\S{{\mathcal S}}
\def\eps{\varepsilon}
\journal{}
\begin{document}

\begin{frontmatter}

\title{On the complexity of failed zero forcing}

\author{Yaroslav Shitov}

\ead{yaroslav-shitov@yandex.ru}

\address{National Research University Higher School of Economics, 20 Myasnitskaya Ulitsa, Moscow 101000, Russia}

\begin{abstract}
Let $G$ be a simple graph whose vertices are partitioned into two subsets, called 'filled' vertices and 'empty' vertices. A vertex $v$ is said to be forced by a filled vertex $u$ if $v$ is a unique empty neighbor of $u$. If we can fill all the vertices of $G$ by repeatedly filling the forced ones, then we call an initial set of filled vertices a forcing set. We discuss the so-called failed forcing number of a graph, which is the largest cardinality of a set which is not forcing. Answering the recent question of Ansill, Jacob, Penzellna, Saavedra, we prove that this quantity is NP-hard to compute. Our proof also works for a related graph invariant which is called the skew failed forcing number.
\end{abstract}

\begin{keyword}
graph theory \sep zero forcing

\MSC[2010] 05C50 \sep 68Q17

\end{keyword}

\end{frontmatter}

\section{Introduction}

This writing is devoted to the problem of zero forcing in graphs, which has recently arisen because of the applications in quantum systems theory~\cite{quannet} and minimum rank problems~\cite{AI}. In what follows, we denote by $G$ or $(V,E)$ a finite simple graph with vertex set $V$ and edge set $E$. We assume that the set $V$ is partitioned into two subsets, which we call the \textit{filled} vertices and \textit{empty} vertices.

Let $F\subset V$ be the set of all filled vertices of $G$. We say that an empty vertex $v$ \textit{is forced by} $F$ if there is a filled vertex $u$ whose unique empty neighbor is $v$. (A related concept of \textit{skew forcing} is defined analogously but the vertex $u$ is not required to be filled.) A set $F$ is said to be \textit{(skew) stalled} if there is no empty vertex (skew) forced by $F$. The largest cardinality of a proper (skew) stalled subset of $V$ is called the \textit{(skew) failed forcing number} of $G$. In~\cite{AJPS}, Ansill, Jacob, Penzellna, Saavedra posed a problem to determine the computational complexity of these invariants.

\section{The result}

The goal of this writing is to solve the above mentioned problem. We determine the complexity status of the failed forcing number by proving that the following problems are NP-complete.

\begin{prob}\label{prob11} (FAILED ZERO FORCING.)

\noindent Given: A finite simple graph $G$ and an integer $s$.

\noindent Question: Does $G$ contain a proper stalled subset of cardinality at least $s$?
\end{prob}

\begin{prob}\label{prob12} (FAILED SKEW ZERO FORCING.)

\noindent Given: A finite simple graph $G$ and an integer $s$.

\noindent Question: Does $G$ contain a proper skew stalled subset of cardinality at least $s$?
\end{prob}

We prove our result by constructing a polynomial reduction from INDEPENDENT SET to both of the above problems. Recall that a subset $U\subset V$ is called \textit{independent} if the vertices in $U$ are pairwise non-adjacent.

\begin{prob} (INDEPENDENT SET.)

\noindent Given: A connected simple graph $G$ and an integer $c$.

\noindent Question: Does $G$ contain an independent set of cardinality $c$?
\end{prob}

We recall that the standard formulation of INDEPENDENT SET does not require $G$ to be connected~\cite{Karp}, but the problem remains NP-complete under this restriction. Indeed, we can add to any graph a new vertex adjacent to every other vertex, and this transformation makes the graph connected but does not change the largest cardinality of an independent set.

We proceed with a description of a reduction $G\to\G$ to Problems~\ref{prob11} and~\ref{prob12}. We will assume that $G$ is an instance of INDEPENDENT SET, that is, a connected graph $G$ with the set of vertices $V$ and the set of edges $E$. We will denote by $\V,\E$ the corresponding sets of $\G$. We denote by $n$ the cardinality of $V$, and we set  $$\V=V\cup E^0\cup\ldots\cup E^{2n}\cup\{\eps\},$$ that is, the labels of the vertices of $\G$ are taken from $V$, from the $2n+1$ copies of $E$, and we have one more vertex denoted by $\eps$. We construct the graph $\G$ as follows.

\noindent (1) We subdivide every edge of $G$. That is, we replace every edge $e=\{u,v\}\in E$ by the two edges $\{u,e^0\}$, $\{e^0,v\}$.

\noindent (2) For all $e\in E$, we draw a simple path of length $2n+1$ beginning at $e^0$. In other words, we add vertices $e^1,\ldots,e^{2n}$ and edges $\{e^i,e^{i+1}\}$ for all $i$.

\noindent (3) We add the vertex $\eps$ and edges $\{\eps,e^0\}$ for all $e\in E$.

We need the following theorem to complete the proof of the main result.

\begin{thr}\label{thr11}
If $k$ is the largest cardinality of an independent set of $G$, then the largest proper stalled subset of $\G$ has cardinality $(2n+1)|E|+k$. The same conclusion holds for the largest proper skew stalled subset of $\G$.
\end{thr}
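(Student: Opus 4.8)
The plan is to prove the equality by establishing matching lower and upper bounds on the size of a largest proper stalled subset of $\G$, and then to deduce the skew statement almost for free.

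For the lower bound I would exhibit an explicit stalled set. Let $I$ be a maximum independent set of $G$, so $|I|=k$. I would fill every vertex of $\G$ except $\eps$ and the $n-k$ vertices of $V\setminus I$; this set has cardinality $(2n+1)|E|+k$ and is proper since $\eps$ is empty. To check that it is stalled I would verify that no filled vertex has a unique empty neighbour: the path vertices $e^1,\dots,e^{2n}$ have only filled neighbours, each vertex of $I$ sees only the filled vertices $e^0$, and each $e^0$ sees the empty vertex $\eps$ together with at least one empty endpoint of $e$ (because $I$ is independent, the edge $e$ cannot have both endpoints in $I$), hence two empty neighbours. The same computation shows that every vertex, filled or not, has $0$ or at least $2$ empty neighbours, so the set is skew stalled as well.

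The heart of the argument is the upper bound, i.e. showing every proper stalled subset omits at least $n+1-k$ vertices. The main device is a lemma about a single path gadget: for each $e$, the number $b_e$ of empty vertices among $e^1,\dots,e^{2n}$ is either $0$ or at least $n$. To prove it I would assume $b_e\ge1$ and first argue that the leaf $e^{2n}$ must be empty, since otherwise the rightmost empty vertex $e^j$ with $j<2n$ would be the unique empty neighbour of the filled $e^{j+1}$ and force it. Then I would show that no two of $e^1,\dots,e^{2n}$ are simultaneously filled and adjacent: a rightmost such pair $e^i,e^{i+1}$ would leave $e^{i+1}$ with the unique empty neighbour $e^{i+2}$, the leaf being already empty. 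Hence the filled vertices form an independent set of the path on $2n$ vertices and number at most $n$, giving $b_e\ge n$. I expect this lemma, together with getting the boundary bookkeeping exactly right, to be the main obstacle.

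With the lemma in hand the upper bound would follow by a short case analysis. If some $b_e\ge1$ then there are already at least $n\ge n+1-k$ empty vertices (using $k\ge1$), so I may assume every path is entirely filled. Then each $e^1$ has the filled neighbour $e^2$, so stalling forbids $e^0$ from being the unique empty neighbour of $e^1$, making every $e^0$ filled; thus the empty vertices lie in $V\cup\{\eps\}$ and each $e^0$ is happy only if it has $0$ or at least $2$ empty neighbours among $\{u,v,\eps\}$. If $\eps$ is filled, this says each edge has both or neither endpoint empty, and connectedness of $G$ forces all of $V$ into one state; properness then makes all $n$ vertices of $V$ empty. If $\eps$ is empty, the condition becomes that each edge has an empty endpoint, so the filled vertices of $V$ form an independent set of size at most $k$, leaving at least $n-k$ empty vertices in $V$ and at least $n+1-k$ in total. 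This matches the lower bound and settles the stalled claim. Finally, for the skew version I would note that every skew stalled set is stalled, since requiring that \emph{no} vertex has a unique empty neighbour is stronger than requiring it only for filled vertices; hence a largest proper skew stalled set is no larger than $(2n+1)|E|+k$, while the set built for the lower bound is already skew stalled, giving equality.
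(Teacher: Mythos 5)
Your proposal is correct, and its skeleton matches the paper's: the identical lower-bound construction (fill a maximum independent set together with all the path vertices $e^0,\dots,e^{2n}$; this is Observation~\ref{obs1}), the identical reduction of the skew claim to the ordinary one via "skew stalled implies stalled," and an upper bound resting on the same three pillars --- rigidity of the path gadgets, the connectedness argument when $\eps$ is filled, and independence of the filled part of $V$ when $\eps$ is empty. The difference is in how the upper bound is organized. The paper argues contrapositively under the standing hypothesis $|\S|\geqslant(2n+1)|E|+2$: pigeonhole (Observation~\ref{obs4}) plus the structural lemma that two consecutive filled path vertices force the whole path to be filled (Observation~\ref{obs3}) shows every path is entirely filled, after which $\eps\notin\S$ (Observations~\ref{obs5a} and~\ref{obs5}) and independence of $\S\cap V$ (Observation~\ref{obs6}) are derived before the final count. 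You instead prove a quantitative per-path dichotomy --- the number of empty vertices among $e^1,\dots,e^{2n}$ is $0$ or at least $n$, via the empty-leaf and rightmost-filled-pair arguments --- and then run a forward case analysis in which each branch is closed by counting empty vertices; in particular, your $\eps$-filled branch ends with "all of $V$ is empty," which you simply count as giving $(2n+1)|E|+1\leqslant(2n+1)|E|+k$, whereas the paper turns the corresponding configuration into a contradiction with its cardinality hypothesis. The two routes are of essentially equal length and difficulty; yours avoids threading a cardinality threshold through the lemmas and extracts slightly sharper per-gadget information, while the paper's formulation makes each observation a self-contained statement about all sufficiently large stalled sets.
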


We will give the proof of Theorem~\ref{thr11} in a separate section below. As a corollary of this theorem, we get that $$(G,k)\to (\G, (2n+1)|E|+k)$$ is a polynomial reduction from INDEPENDENT SET to both Problems~\ref{prob11} and~\ref{prob12}. In particular, these problems are NP-complete, so the usual and skew failed forcing numbers are NP-hard to compute.

\section{The proof}

We are going to finalize the paper by proving Theorem~\ref{thr11}. Our first lemma establishes the lower bound on the failed forcing number of $\G$ in terms of $k$. (Here and in the rest of the paper, we denote by $k$ the largest cardinality of an independent set of $G$.)

\begin{obs}\label{obs1}
The graph $\G$ contains a skew stalled subset of cardinality $(2n+1)|E|+k$.
\end{obs}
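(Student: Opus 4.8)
The plan is to exhibit an explicit skew stalled set of the prescribed size. I would begin by fixing a maximum independent set $U\subseteq V$ with $|U|=k$, and then declare the filled vertices of $\G$ to be $U$ together with all of the path vertices $e^0,e^1,\ldots,e^{2n}$ over every $e\in E$; equivalently, I leave empty exactly the vertices of $(V\setminus U)\cup\{\eps\}$. Since $|\V|=n+(2n+1)|E|+1$, the filled set then has cardinality $(2n+1)|E|+k$ as required, and because the empty set $(V\setminus U)\cup\{\eps\}$ is nonempty (it contains $\eps$), the filled set is a proper subset.

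The substance of the argument is to check that no vertex of $\G$ has exactly one empty neighbor, which is precisely the condition that the filled set be skew stalled. I would verify this by sorting the vertices into types. Every original vertex $w\in V$ has neighbors only among the subdivision vertices $e^0$ with $e\ni w$, all of which are filled, so $w$ has no empty neighbor; the same holds for $\eps$, whose neighbors are exactly the filled vertices $e^0$. Each path vertex $e^i$ with $i\geq 1$ has as neighbors only other path vertices (namely $e^{i-1}$ and, when it exists, $e^{i+1}$), all filled, so these also have zero empty neighbors. Thus every vertex of these types trivially avoids having a unique empty neighbor.

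The only vertices requiring care are the subdivision vertices $e^0$, whose neighbors are the two endpoints $u,v$ of $e$, the vertex $\eps$, and the path vertex $e^1$. Since $e^1$ is filled and $\eps$ is empty, each $e^0$ has at least one empty neighbor, namely $\eps$; to exclude the possibility of exactly one, I need at least one of $u,v$ to be empty as well. This is exactly where the independence of $U$ is used: as $e=\{u,v\}$ is an edge of $G$ and $U$ is independent, $u$ and $v$ cannot both lie in $U$, so at least one endpoint is empty, giving $e^0$ at least two empty neighbors. I do not expect any serious obstacle here; the entire content of the observation is the correct choice of empty set, and the point of including $\eps$ is to supply a guaranteed second empty neighbor at each $e^0$, so that independence of $U$ alone suffices to block every skew forcing step.
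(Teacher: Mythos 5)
Your proposal is correct and takes essentially the same approach as the paper: both fill $\U=U\cup E^0\cup\cdots\cup E^{2n}$ for a maximum independent set $U$ and check that no vertex of $\G$ has a unique empty neighbor, with the empty vertex $\eps$ and the independence of $U$ together ensuring each $e^0$ has at least two empty neighbors. The only cosmetic difference is that you verify the stalled condition directly over all vertex types, while the paper argues by contradiction from a hypothetical skew-forced vertex.
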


\begin{proof}
Let $U$ be an independent set of $G$. We need to show that $$\U=U\cup E^0\cup\ldots\cup E^{2n}$$ is a skew stalled subset of $\G$. If this is not the case, then some vertex $x\in\V\setminus\U$ is skew forced by $\U$. This means that either $x=\eps$ or $x\in V\setminus U$, and there is a vertex $y$ for which $x$ is an only neighbor outside $\U$. We treat the two cases separately.

1. The vertex $\eps$ is adjacent only to the vertices $e^0$, so $x=\eps$ implies $y=e^0$ with $e=\{a,b\}\in E$. Since $x$ is an only neighbor of $y$ that lives outside $\U$, the vertices $a,b$ belong to $\U$. So we have $a,b\in U$ and $\{a,b\}\in E$, which is a contradiction because $U$ is an independent set of $G$.

2. Now assume $x\in V\setminus U$. The vertices adjacent to $x$ have the form $e^0$ again, so we get $y=e^0$. This is a contradiction because such a $y$ is adjacent to the vertices $\varepsilon,x\notin\U$.
\end{proof}



\begin{obs}\label{obs3}
If a stalled subset $\S$ of $\G$ contains $e^{i}, e^{i+1}$, for some $e$ and $i$, then $\S$ contains $e^0,\ldots,e^{2n}$ as well.
\end{obs}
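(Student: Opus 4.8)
The plan is to exploit that each path $e^0,e^1,\ldots,e^{2n}$ consists, in its interior, of vertices of degree exactly $2$, at which the stalled condition is extremely rigid. First I would record the key local fact: if $w$ is a \emph{filled} vertex of degree $2$ and one of its two neighbors is filled, then the other neighbor must be filled as well. Indeed, were the other neighbor empty, $w$ would have a unique empty neighbor and would therefore force it, contradicting that $\S$ is stalled. The internal path vertices $e^1,\ldots,e^{2n-1}$ are precisely the degree-$2$ vertices of the path, their only neighbors being $e^{j-1}$ and $e^{j+1}$, so this fact applies to each of them.

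Next I would propagate to the right. Starting from the filled pair $e^i,e^{i+1}$, I apply the local fact at the internal vertex $e^{i+1}$: since $e^i$ is filled, $e^{i+2}$ must be filled. Iterating, I invoke the fact successively at $e^{i+1},e^{i+2},\ldots,e^{2n-1}$, each time using the freshly filled predecessor to fill the next vertex, until $e^{2n}$ is reached. Note that I never invoke the stalled condition at the endpoint $e^{2n}$ itself, so its having degree $1$ causes no difficulty.

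Then I would propagate to the left by the symmetric argument: I apply the local fact at the internal vertices $e^i,e^{i-1},\ldots,e^1$ in turn, each time using the already-filled successor to conclude that the preceding vertex is filled, ending with $e^0$. Crucially, the stalled condition is applied only at $e^1,\ldots,e^{2n-1}$ and never at $e^0$; hence the extra edges incident to $e^0$ (to $\eps$ and to the two endpoints of $e$) play no role. Combining the two directions yields $e^0,\ldots,e^{2n}\in\S$.

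The only point demanding care, and the part I would present most explicitly, is the treatment of the two boundary vertices $e^0$ and $e^{2n}$, together with the degenerate cases $i=0$ and $i=2n-1$ in which one of the two propagations is vacuous. Since the degree-$2$ local fact is only ever invoked at interior vertices, no genuine obstacle arises; the argument is simply a two-sided induction along the path. Finally, I would remark that the statement, though phrased for stalled sets, applies verbatim to skew stalled sets, since every skew stalled set is in particular stalled.
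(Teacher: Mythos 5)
Your proof is correct and takes essentially the same approach as the paper: both arguments rest on the fact that a filled internal path vertex $e^j$ ($1\leqslant j\leqslant 2n-1$) with one filled neighbor would force its other neighbor, so filled-ness propagates along the path in both directions, with the stalled condition never invoked at $e^0$ or $e^{2n}$. The only difference is presentational: the paper argues by contradiction, choosing the missing index nearest to the pair $e^i,e^{i+1}$ and forcing that vertex, whereas you run the same propagation as a direct two-sided induction via your degree-$2$ local fact.
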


\begin{proof}
Assume that the result is not true, which means that $e^t\notin\S$ for some $t$. If $t<i$, then we choose the maximal $\tau<i$ for which $e^\tau\notin\S$. Then $e^\tau$ is the only vertex outside $\S$ which is adjacent to $e^{\tau+1}$. We see that $\S$ forces $e^\tau$,  which is impossible because $\S$ is stalled.

Similarly, if $t>i$, then we choose the minimal ${\tau}>i+1$ for which $e^{\tau}\notin\S$. Then $e^{\tau}$ is the only vertex that lives outside $\S$ and is adjacent to $e^{\tau-1}$. We see that $\S$ forces $e^\tau$ and get a contradiction.
\end{proof}

\begin{obs}\label{obs4}
Let $\S$ be a stalled subset of $\G$ such that $|\S|\geqslant (2n+1)|E|+2$. Then $\mathcal{S}$ contains $e^0,\ldots,e^{2n}$ for all $e\in E$.
\end{obs}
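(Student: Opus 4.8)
The plan is to argue by contradiction, forcing the counting bound to collapse onto the threshold in the hypothesis, with Observation~\ref{obs3} supplying the one nontrivial inequality. For each edge $e$ write $P_e=\{e^0,\ldots,e^{2n}\}$ for the associated path, so that the vertex set decomposes as $\V=V\cup\bigcup_{e\in E}P_e\cup\{\eps\}$ with $|V|=n$ and $|P_e|=2n+1$.

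First I would suppose, for contradiction, that some path $P_f$ is not entirely contained in $\S$. Since $\S$ is stalled, the contrapositive of Observation~\ref{obs3} forbids $\S$ from containing any two consecutive vertices $f^i,f^{i+1}$ of this path: were it to do so, the whole of $P_f$ would lie in $\S$. Hence $\S\cap P_f$ is an independent set in the path on $2n+1$ vertices, and so $|\S\cap P_f|\leqslant n+1$.

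Next I would bound the remaining contributions by the trivial estimates $|\S\cap V|\leqslant n$, $|\S\cap\{\eps\}|\leqslant 1$, and $|\S\cap P_e|\leqslant 2n+1$ for every $e\neq f$. Summing over the partition gives
$$|\S|\leqslant n+(|E|-1)(2n+1)+(n+1)+1=(2n+1)|E|+1,$$
which contradicts the assumption $|\S|\geqslant (2n+1)|E|+2$. This contradiction shows that every $P_f$ lies in $\S$, as claimed.

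I do not expect a genuine obstacle: once Observation~\ref{obs3} is invoked the argument is pure bookkeeping over the vertex partition. The only points meriting care are the independent-set bound $|\S\cap P_f|\leqslant n+1$ for a $(2n+1)$-vertex path, and verifying that a single deficient path costs exactly $(2n+1)-(n+1)=n$ out of the maximum possible $(2n+1)|E|+n+1$, bringing the total down to $(2n+1)|E|+1$, one short of the stated threshold.
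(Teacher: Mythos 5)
Your proof is correct and is essentially the paper's own argument, just run in the contrapositive direction: the paper counts that at most $n-1$ vertices lie outside $\S$, so each path keeps at least $n+2$ vertices and hence two consecutive ones, then invokes Observation~\ref{obs3}; you instead assume a deficient path, use Observation~\ref{obs3} to cap it at $n+1$ vertices (the same pigeonhole fact), and derive the contradictory bound $|\S|\leqslant(2n+1)|E|+1$. The counting, the key lemma, and the path-independence bound are identical in both versions.
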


\begin{proof}
Since $|\V|=(2n+1)|E|+n+1$, there are at most $n-1$ vertices of $\G$ that lie outside $\S$. Therefore, there are at least $n+2$ vertices among $e^0,\ldots,e^{2n}$ that belong to $\S$. In particular, there are two consecutive indexes $i,j$ such that $\S$ contains $e^i,e^j$. By Observation~\ref{obs3}, all the vertices $e^0,\ldots,e^{2n}$ belong to $\S$.
\end{proof}

\begin{obs}\label{obs5a}
Let $\S$ be a proper stalled subset of $\G$ such that $|\S|\geqslant (2n+1)|E|+2$ and $\eps\in\S$. Then $\S\cap V$ is a union of several connected components of $G$.
\end{obs}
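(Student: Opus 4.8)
The plan is to show that no edge of $G$ crosses the boundary of $\S$, i.e.\ that there is no edge $\{u,v\}\in E$ with $u\in\S$ and $v\notin\S$. This is precisely equivalent to $\S\cap V$ being a union of connected components of $G$, so once the boundary condition is established the observation follows immediately, with no need to invoke the connectivity of $G$ for this particular step.

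First I would record which vertices outside $V$ are forced to lie in $\S$. Since $|\S|\geqslant (2n+1)|E|+2$, Observation~\ref{obs4} gives $e^0,\ldots,e^{2n}\in\S$ for every $e\in E$, and by hypothesis $\eps\in\S$ as well. Hence the only vertices of $\G$ that can be empty (that is, outside $\S$) are the original vertices of $V$; in particular all the subdivision vertices $e^0$ and all the first path vertices $e^1$ are filled.

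The main step is a purely local forcing argument at a subdivision vertex. Fix an edge $e=\{u,v\}\in E$ and consider $e^0$, which lies in $\S$. In $\G$ the neighbors of $e^0$ are exactly $u$, $v$, $\eps$ and $e^1$; among these, $\eps$ and $e^1$ are filled by the previous paragraph. Thus the empty neighbors of $e^0$ can only be among $u$ and $v$. Since $\S$ is stalled, $e^0$ cannot have a unique empty neighbor, so it is impossible that exactly one of $u,v$ lies outside $\S$. Therefore, for every edge of $G$, its two endpoints are either both in $\S$ or both outside $\S$, which is the desired boundary condition.

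I expect no genuine obstacle here: once the vertices $e^0$, $e^1$ and $\eps$ are known to be filled, the stalled condition at $e^0$ directly forbids a boundary edge. The only points requiring care are to enumerate the neighbors of $e^0$ correctly (remembering the edge to $\eps$ from step~(3) and the edge to $e^1$ from step~(2)) and to use the stalled property in the form ``no filled vertex has a unique empty neighbor'', applied to $e^0$, rather than relying on any global cardinality count.
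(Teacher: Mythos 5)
Your proposal is correct and follows essentially the same argument as the paper: both invoke Observation~\ref{obs4} and the hypothesis $\eps\in\S$ to conclude that all neighbors of $e^0$ other than the endpoints $u,v$ are filled, and then apply the stalled condition at $e^0$ to rule out a boundary edge. The only difference is presentational — you phrase it as a direct local argument for every edge, while the paper fixes a hypothetical boundary edge and derives the contradiction that $b$ would be forced.
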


\begin{proof}
Assume the converse. Then there are vertices $a,b\in V$ such that $a\in\S$, $b\notin\S$, and $e=\{a,b\}\in E$. By the construction of $\G$, the vertex $e^0$ is adjacent to $a,b,\eps,e^1$. We note that $e^0,e^1\in\S$ by Observation~\ref{obs4},  $\eps\in\S$ by the assumption of the lemma, and $a\in\S$, $b\notin\S$ by the above. We see that $b$ is an only neighbor of $e^0$ that lies outside $\S$. So we see that $b$ is forced, which is a contradiction.
\end{proof}

\begin{obs}\label{obs5}
Let $\S$ be a proper stalled subset of $\G$ such that $|\S|\geqslant (2n+1)|E|+2$. Then $\eps\notin\S$.
\end{obs}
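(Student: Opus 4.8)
The plan is to argue by contradiction, assuming $\eps\in\S$, and to extract a contradiction from the interplay of Observations~\ref{obs4} and~\ref{obs5a} together with the hypothesis that $G$ is connected.

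First I would invoke Observation~\ref{obs5a}: since $\S$ is a proper stalled subset with $|\S|\geqslant (2n+1)|E|+2$ and, by the assumption for contradiction, $\eps\in\S$, the set $\S\cap V$ is a union of connected components of $G$. This is where the connectedness of $G$ — guaranteed because $G$ is an instance of INDEPENDENT SET — does the real work: a union of connected components of a connected graph is either empty or the whole vertex set $V$. Hence it suffices to rule out the two extreme cases $\S\cap V=V$ and $\S\cap V=\emptyset$.

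In the first case, combining $\S\cap V=V$ with Observation~\ref{obs4}, which places every $e^0,\ldots,e^{2n}$ in $\S$, and the assumption $\eps\in\S$, I obtain $\S=\V$, contradicting that $\S$ is proper. In the second case, $\S\cap V=\emptyset$ forces $\S$ to consist only of $\eps$ and the $(2n+1)|E|$ path vertices, so that $|\S|=(2n+1)|E|+1$; this contradicts the hypothesis $|\S|\geqslant (2n+1)|E|+2$. Either way we reach a contradiction, so $\eps\notin\S$.

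I expect the only subtlety — rather than a genuine obstacle — to be the bookkeeping of cardinalities: the threshold $(2n+1)|E|+2$ in the hypothesis is exactly what is needed to eliminate the empty-intersection case, since a subset containing $\eps$ and all path vertices but no vertex of $V$ has precisely one fewer element than required. Everything else follows directly from the earlier observations, so the argument is short once the dichotomy from connectedness is in place.
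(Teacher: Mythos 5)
Your proposal is correct and follows essentially the same route as the paper: assume $\eps\in\S$, use Observation~\ref{obs5a} together with the connectedness of $G$ to conclude that $\S\cap V$ is either $\varnothing$ or $V$, and then rule out the first case by the cardinality hypothesis and the second by properness of $\S$. The cardinality bookkeeping you flag as the only subtlety is handled identically in the paper.
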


\begin{proof}
Assume the converse, which means that $\eps\in\S$. By Observation~\ref{obs4}, we have $E^i\subset\S$ for all $i$. Since the graph $G$ is connected, Observation~\ref{obs5a} implies that either $\S\cap V=\varnothing$ or $\S\cap V=V$. The former condition leads to a contradiction with $|\S|\geqslant (2n+1)|E|+2$, and the latter is impossible because $\S$ is a proper subset.
\end{proof}

\begin{obs}\label{obs6}
Let $\S$ be a proper stalled subset of $\G$ such that $|\S|\geqslant (2n+1)|E|+2$. Then $\mathcal{S}\cap V$ is an independent set of $G$.
\end{obs}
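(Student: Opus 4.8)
The plan is to argue by contradiction, reducing the failure of independence to the existence of a forced vertex, which the stalledness of $\S$ forbids. So I would suppose that $\S\cap V$ is not an independent set of $G$. Then there is an edge $e=\{a,b\}\in E$ both of whose endpoints lie in $\S$, that is, $a,b\in\S\cap V$. I would then focus attention on the subdivision vertex $e^0$ and its neighborhood in $\G$.

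Next I would read off the neighbors of $e^0$ directly from the construction: $e^0$ is adjacent exactly to $a$, $b$, $\eps$, and $e^1$, namely the two endpoints of the subdivided edge, the hub vertex $\eps$, and the first vertex of the path attached at $e^0$. At this point the two preceding observations do all the work. Since $|\S|\geqslant(2n+1)|E|+2$, Observation~\ref{obs4} applies and gives $e^0,\ldots,e^{2n}\in\S$, in particular $e^1\in\S$; for the same reason Observation~\ref{obs5} applies and gives $\eps\notin\S$. Combining this with $a,b\in\S$, I conclude that $\eps$ is the unique neighbor of $e^0$ lying outside $\S$.

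It then remains only to conclude: since $e^0\in\S$ is a filled vertex whose single empty neighbor is $\eps$, the vertex $\eps$ is forced by $\S$, contradicting the assumption that $\S$ is stalled. Hence $\S\cap V$ must be independent. The same reasoning covers the skew case without change, since a skew stalled set is in particular stalled.

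I do not anticipate a genuine obstacle, because the substantive facts, that each path $e^0,\ldots,e^{2n}$ is contained in $\S$ and that $\eps\notin\S$, are already secured by Observations~\ref{obs4} and~\ref{obs5}. The only point that needs care is the exact enumeration of $N(e^0)$: one must verify that $a$, $b$, $\eps$, and $e^1$ exhaust the neighbors of $e^0$, so that placing all of them except $\eps$ inside $\S$ genuinely isolates a unique empty neighbor. In effect this observation is the $\eps\notin\S$ companion of Observation~\ref{obs5a}, which disposed of the case $\eps\in\S$.
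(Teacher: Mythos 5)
Your proof is correct and coincides with the paper's own argument: both proceed by contradiction from an edge $e=\{a,b\}$ with $a,b\in\S\cap V$, enumerate the neighbors of $e^0$ as $a,b,\eps,e^1$, invoke Observation~\ref{obs4} to get $e^0,e^1\in\S$ and Observation~\ref{obs5} to get $\eps\notin\S$, and conclude that $e^0$ forces $\eps$, contradicting stalledness.
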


\begin{proof}
Assume the converse. We have that $a,b\in\S\cap V$ and $e=\{a,b\}\in E$. By the construction of $\G$, the vertex $e^0$ is adjacent to $a,b,\eps,e^1$. We note that $a,b\in\S$ by the above, $e^0,e^1\in\S$ by Observation~\ref{obs4}, and $\eps\notin\S$ by Observation~\ref{obs5}. We see that $\eps$ is an only neighbor of $e^0$ that lies outside $\S$. So we see that $\eps$ is forced, which is a contradiction.
\end{proof}

\begin{obs}\label{obs7}
Every proper stalled subset $\S$ of $\G$ has cardinality at most $(2n+1)|E|+k$.
\end{obs}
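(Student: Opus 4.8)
The plan is to combine the three preceding observations with an elementary cardinality count, after first disposing of the case of a small $\S$. Since $G$ is connected and hence nonempty, it contains at least one vertex, so that $k\geqslant 1$; I will use this fact in the degenerate regime.

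First I would split on the size of $\S$. If $|\S|\leqslant (2n+1)|E|+1$, then the desired bound is immediate from $k\geqslant 1$, since $(2n+1)|E|+1\leqslant (2n+1)|E|+k$. This settles the regime in which the hypotheses of Observations~\ref{obs4}--\ref{obs6} do not apply.

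The substantive case is $|\S|\geqslant (2n+1)|E|+2$. Here Observation~\ref{obs4} yields $E^0\cup\ldots\cup E^{2n}\subset\S$, Observation~\ref{obs5} gives $\eps\notin\S$, and Observation~\ref{obs6} shows that $\S\cap V$ is an independent set of $G$, whence $|\S\cap V|\leqslant k$. The vertex set $\V$ is the disjoint union of $V$, the subdivision and path vertices $E^0\cup\ldots\cup E^{2n}$, and the singleton $\{\eps\}$. As $\eps\notin\S$, the set $\S$ is therefore the disjoint union of $\S\cap V$ and $E^0\cup\ldots\cup E^{2n}$, so $$|\S|=|\S\cap V|+(2n+1)|E|\leqslant k+(2n+1)|E|,$$ which is exactly the claimed inequality.

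I do not expect any genuine obstacle, since the structural heavy lifting is already complete: Observation~\ref{obs4} pins down the path vertices, Observation~\ref{obs5} excludes $\eps$, and Observation~\ref{obs6} bounds $\S\cap V$. The only point deserving explicit care is the small-cardinality case, where the estimate follows not from structure but from the elementary inequality $k\geqslant 1$; it is worth recording that this is precisely where the connectedness assumption on $G$ (ensuring that $G$ is nonempty, hence $k\geqslant 1$) enters the argument.
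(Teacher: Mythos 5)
Your proof is correct and takes essentially the same route as the paper: the structural Observations~\ref{obs4}--\ref{obs6} combined with a cardinality count, with the degenerate case disposed of via $k\geqslant 1$. The only cosmetic differences are that you count the vertices inside $\S$ directly (using Observation~\ref{obs4} to get the exact value $|\S|=|\S\cap V|+(2n+1)|E|$) whereas the paper argues by contradiction and counts the at least $n-k+1$ vertices outside $\S$, and that you make explicit the small-cardinality case split that the paper leaves implicit in its contradiction hypothesis.
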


\begin{proof}
If the result was not true, then Observations~\ref{obs5} and~\ref{obs6} would be applicable. We have $\eps\notin\S$ and $|\S\cap V|\leqslant k$, which means that there are at least $n-k+1$ vertices outside $\S$. The total number of vertices of $\G$ is $(2n+1)|E|+n+1$, so we are done.
\end{proof}

We note that Observations~\ref{obs1} and~\ref{obs7} complete the proof of Theorem~\ref{thr11}. In fact, every skew stalled set is also a stalled set, so the obtained bounds hold for the cardinalities of both usual and skew stalled sets.


\begin{thebibliography}{9}

\bibitem{AI}
AIM Minimum Rank-Special Graphs Work Group, Zero forcing sets and the minimum rank of graphs, Linear Algebra Appl. 428(7) (2008) 1628--1648.

\bibitem{AJPS}
T. Ansill, B. Jacob, J. Penzellna, D. Saavedra, Failed skew zero forcing on a graph, Linear Algebra Appl. 509 (2016) 40--63.

\bibitem{quannet} 
D. Burgarth, V. Giovannetti, Full Control by Locally Induced Relaxation, Phys. Rev. Lett. 99 (2007) 100501.


\bibitem{Karp}
R. Karp, Reducibility Among Combinatorial Problems, Proceedings of the Symposium on the Complexity of Computer Computations (1972) 85--103.
\end{thebibliography}
\end{document}